\documentclass{amsart}

\usepackage[T1]{fontenc}
\usepackage{lmodern}
\usepackage{geometry}
\usepackage{listings}
\usepackage{amsmath,amssymb,amsthm,amsfonts,mathtools}
\usepackage{hyperref}
\newcommand{\R}{\mathbb{R}}
\newcommand{\C}{\mathbb{C}}
\newcommand{\D}{\mathbb{D}}

\theoremstyle{plain}
\newtheorem{theorem}{Theorem}
\newtheorem{lemma}[theorem]{Lemma}
\newtheorem{proposition}[theorem]{Proposition}
\theoremstyle{remark}
\newtheorem{remark}[theorem]{Remark}

\title{The $L^2$-Norm of the Cauchy transform on circular annuli}
\author{David Kalaj}
\address{University of Montenegro, Faculty of Natural Sciences and
Mathematics, Cetinjski put b.b. 81000 Podgorica, Montenegro}
\email{davidk@ucg.ac.me}

\date{}
\subjclass[2020]{Primary 47B38, 35J25; Secondary 42B20, 31B10, 30E20, 35P15}
\begin{document}
\maketitle

\begin{abstract}
We compute the exact $L^2$ operator norm of the Cauchy transform
\[
(C_\Omega f)(z)=\frac1\pi\int_\Omega \frac{f(w)}{z-w}\,dA(w)
\]
on a circular annulus $\Omega=A(r,R)=\{r<|z|<R\}$.
Exploiting rotational symmetry and a Fourier mode decomposition,
we reduce the problem to a one--dimensional weighted Hardy operator and obtain
\[
\|C_{A(r,R)}\|_{L^2\to L^2}
=
\frac{2}{\sqrt{\mu_1^{ND}(r,R)}},
\]
where $\mu_1^{ND}(r,R)$ is the first eigenvalue of the Laplacian on $A(r,R)$
with Neumann condition on the inner boundary and Dirichlet condition on the outer boundary.
The extremizers are explicitly described in terms of Bessel functions.
\end{abstract}

\section{Introduction}

The Cauchy transform on a planar domain $\Omega\subset\C$,
\[
(C_\Omega f)(z)=\frac1\pi\int_\Omega \frac{f(w)}{z-w}\,dA(w),
\]
is a fundamental operator in complex analysis and elliptic PDE.  It gives a
canonical solution operator for the $\bar\partial$-equation in the sense that
\[
\bar\partial\,(C_\Omega f)=f\qquad\text{in }\Omega
\]
(in the distributional sense, and pointwise whenever $f$ is sufficiently
regular).  Consequently, sharp bounds for the operator norm
$\|C_\Omega\|_{L^2(\Omega)\to L^2(\Omega)}$ are closely connected to
quantitative $\bar\partial$ estimates and to spectral data of associated
elliptic boundary value problems.

On smoothly bounded connected domains one has the spectral lower bound
\[
\|C_\Omega\|_{L^2(\Omega)\to L^2(\Omega)}\ge \frac{2}{\sqrt{\lambda_1(\Omega)}},
\]
where $\lambda_1(\Omega)$ denotes the first Dirichlet eigenvalue of $-\Delta$
on $\Omega$; see \cite{AndersonKhavinsonLomonosov1992}.  More precisely, the
estimate can be expressed in the equivalent form
\begin{proposition}[Norm estimate for the Cauchy transform]
Let $\Omega\subset\R^2$ be a smoothly bounded domain. Then
\[
\left\|\frac{C_\Omega^{*}C_\Omega}{4}\right\|
=\frac14\,\|C_\Omega\|^2
\ge \frac{1}{\lambda_1(\Omega)},
\]
where $\lambda_1(\Omega)$ is the smallest positive eigenvalue for the boundary
value problem \emph{(1.2)} in $\Omega$.
\end{proposition}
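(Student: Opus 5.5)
We test $C_\Omega^*C_\Omega$ against the first Dirichlet eigenfunction and compute the quadratic form exactly using $\bar\partial C_\Omega = \mathrm{id}$. Here (1.2) is the Dirichlet problem $-\Delta u=\lambda u$ in $\Omega$, $u=0$ on $\partial\Omega$. Since $\|C_\Omega\|^2=\|C_\Omega^*C_\Omega\|$, it suffices to find one $g\in L^2(\Omega)$ with
\[
\|C_\Omega g\|^2\ \ge\ \frac{4}{\lambda_1(\Omega)}\,\|g\|^2 .
\]

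\emph{Choice of test function.} Let $u$ be a real first Dirichlet eigenfunction, so $-\Delta u=\lambda_1 u$ in $\Omega$ and $u|_{\partial\Omega}=0$. It is smooth up to the boundary because $\partial\Omega$ is smooth. Put $g=\partial u=\tfrac12(u_x-iu_y)$. Then $\|g\|^2=\tfrac14\int_\Omega|\nabla u|^2\,dA=\tfrac{\lambda_1}{4}\int_\Omega u^2\,dA$.

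\emph{Computing $C_\Omega g$.} Since $u$ vanishes on $\partial\Omega$ and is smooth up to the boundary, extending $u$ by zero gives a Lipschitz, compactly supported function on $\C$. For such $\varphi$ the Cauchy--Pompeiu formula $\varphi=C_{\C}(\bar\partial\varphi)$ holds, and the conjugate version reads $\bar\varphi = C_{\C}(\overline{\partial\varphi})$. For real $u$ this gives $u=\overline{u}=C_\Omega(\overline{\partial u})$. So we use $\tilde g=\overline{\partial u}=\bar\partial u$ instead, which has the same $L^2$ norm. We obtain $C_\Omega\tilde g=u$ exactly, and hence
\[
\frac{\|C_\Omega\tilde g\|^2}{\|\tilde g\|^2}=\frac{\int u^2}{\frac{\lambda_1}{4}\int u^2}=\frac{4}{\lambda_1}.
\]
This already gives $\|C_\Omega\|^2\ge 4/\lambda_1$, i.e. $\tfrac14\|C_\Omega\|^2\ge 1/\lambda_1$. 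The identity $\|C^*_\Omega C_\Omega\|=\|C_\Omega\|^2$ holds for any bounded operator, and $C_\Omega$ is bounded on $L^2$ of a bounded domain because its kernel is locally integrable (Schur test).

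\emph{Main obstacle.} The delicate point is justifying $C_\Omega(\bar\partial u)=u$ pointwise on all of $\Omega$. This requires no boundary term in the Cauchy--Pompeiu formula. The boundary term $\frac{1}{2\pi i}\int_{\partial\Omega}\frac{u(w)}{w-z}\,dw$ vanishes because $u=0$ on $\partial\Omega$, and this is exactly where the Dirichlet condition is used. Regularity of $u$ up to the boundary, from elliptic regularity on smooth domains, makes $\bar\partial u$ bounded and the formula legitimate.

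Connectedness is not actually needed. Moreover the inequality is typically strict, because $\tilde g$ need not be an eigenvector of $C^*_\Omega C_\Omega$. That is consistent with the annulus result, where the norm is governed by the mixed Neumann--Dirichlet eigenvalue $\mu_1^{ND}\le\lambda_1$.
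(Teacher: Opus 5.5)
Your proof is correct, and it is a different route because the paper gives no proof of this proposition. The paper quotes the result from Anderson--Khavinson--Lomonosov, and its own machinery (Fourier modes reducing $C_A$ to one-dimensional Hardy operators) works only for rotationally symmetric domains.

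Your argument is a test-function bound that holds for every bounded domain. It rests on two facts: $u=C_\Omega(\bar\partial u)$ for $u$ vanishing on $\partial\Omega$, and $\|\bar\partial u\|^2=\tfrac14\|\nabla u\|^2=\tfrac{\lambda_1}{4}\|u\|^2$.

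Within the paper, the inequality can be recovered only for the annulus, from Theorem~\ref{thm:norm-annulus}. There $\mu_1$ is the infimum of $\int_r^R|U'|^2\rho\,d\rho\big/\int_r^R|U|^2\rho\,d\rho$ over $U(R)=0$. Because the first Dirichlet eigenfunction of $A(r,R)$ is radial, $\lambda_1(A)$ is the same infimum with the extra constraint $U(r)=0$, so $\mu_1\le\lambda_1(A)$.

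Your test function fits exactly into this picture. For radial $u$ one has $\bar\partial u=\tfrac12u'(\rho)e^{i\theta}$, which lies in the paper's $m=1$ block with profile $F=-\tfrac12u$. This profile satisfies the imposed condition $F(r)=0$ rather than the natural condition $F'(r)=0$, which is exactly why it falls short of the sharp value on the annulus.

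The trade-off is this. The paper's approach gives the exact norm and extremizers but needs rotational symmetry. Yours gives the lower bound in full generality but says nothing about sharpness. Your closing remark on strictness is only heuristic, and the disk is an equality case.

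Two small simplifications are available:
\begin{itemize}
\item The detour through $\partial u$ is unnecessary; start directly with $\bar\partial u$.
\item Boundary regularity is not needed. Since $C_\Omega$ is bounded on $L^2(\Omega)$, the identity $\varphi=C_\Omega(\bar\partial\varphi)$ for $\varphi\in C_c^\infty(\Omega)$ extends by density to all $u\in W^{1,2}_0(\Omega)$. The identity $\int_\Omega|\bar\partial u|^2=\tfrac14\int_\Omega|\nabla u|^2$ holds there even for complex $u$, because $\int_\Omega(|\partial u|^2-|\bar\partial u|^2)\,dA=0$. So the bound holds for arbitrary bounded open sets, with $\lambda_1$ defined variationally.
\end{itemize}
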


For the unit disk $D=\{z\in\C:\ |z|<1\}$, Anderson and Hinkkanen computed the
exact $L^2$-operator norm.  If $j_{0,1}$ denotes the smallest positive zero of
the Bessel function $J_0$, then
\[
\|C_D\|_{L^2(D)\to L^2(D)}=\frac{2}{j_{0,1}},
\qquad\text{equivalently}\qquad
\frac14\,\|C_D\|^2=\frac{1}{j_{0,1}^2}.
\]
Since $\lambda_1(D)=j_{0,1}^2$, this shows that equality holds in the disk case:
\[
\frac14\,\|C_D\|^2=\frac{1}{\lambda_1(D)}.
\]
We refer to \cite{AndersonHinkkanen1989} for the sharp constant and explicit
extremizers.  In contrast, for multiply connected domains additional holomorphic
components may influence sharpness.

In this paper we compute the exact $L^2$-operator norm of $C_\Omega$ when
$\Omega$ is the circular annulus
\[
A(r,R)=\{z\in\C:\ r<|z|<R\}.
\]

Our main result identifies the norm with the first mixed eigenvalue of the
Laplacian (Neumann on \(|z|=r\), Dirichlet on \(|z|=R\)).

\begin{theorem}[Main result]\label{thm:intro-main}
Let \(0<r<R\). Then
\[
\|C_{A(r,R)}\|_{L^2\to L^2}=\frac{2}{\sqrt{\mu_1^{ND}(r,R)}},
\]
where \(\mu_1^{ND}(r,R)\) is the first eigenvalue of
\[
-\Delta U=\mu U \ \text{in }A(r,R),\qquad
\partial_\nu U=0 \ \text{on }|z|=r,\qquad
U=0 \ \text{on }|z|=R.
\]
Equivalently, \(\mu_1^{ND}(r,R)=\kappa_1^2\), where \(\kappa_1\) is the smallest
positive root of a Bessel determinant (see Theorem~\ref{thm:norm-annulus}).
\end{theorem}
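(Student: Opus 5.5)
The plan is to decompose $L^2(A(r,R))$ into angular Fourier modes, reduce $C_{A(r,R)}$ to a family of one-dimensional Hardy-type operators, and show that the radial mode $k=0$ is extremal, with its norm given by $\mu_1^{ND}(r,R)$. Write $f(\rho e^{i\theta})=\sum_{k\in\mathbb Z} f_k(\rho)e^{ik\theta}$, so that $\|f\|^2=2\pi\sum_k\int_r^R|f_k|^2\rho\,d\rho$.

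\textbf{Step 1: mode decomposition.} For $w=se^{i\varphi}$, expand the kernel as
\[
\frac{1}{z-w}=\sum_{n\ge0}\frac{w^n}{z^{n+1}}\ (|w|<|z|),\qquad
\frac{1}{z-w}=-\sum_{n\ge0}\frac{z^n}{w^{n+1}}\ (|w|>|z|).
\]
Integrating term by term against $e^{ik\varphi}$ shows that $C_{A(r,R)}$ maps the $k$-th mode into the $(k-1)$-th mode, $e^{ik\theta}f_k\mapsto e^{i(k-1)\theta}(T_kf_k)(\rho)$, where
\[
(T_kf)(\rho)=2\rho^{k-1}\int_r^\rho s^{1-k}f(s)\,ds\quad(k\le 0),\qquad
(T_kf)(\rho)=-2\rho^{k-1}\int_\rho^R s^{1-k}f(s)\,ds\quad(k\ge1).
\]
Distinct modes are sent to orthogonal subspaces, so
\[
\|C_{A(r,R)}\|=\sup_{k\in\mathbb Z}\|T_k\|_{L^2(\rho\,d\rho)\to L^2(\rho\,d\rho)}.
\]

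\textbf{Step 2: the radial mode.} Put $u(\rho)=\int_r^\rho f(s)s\,ds$, so that $u(r)=0$ and $f=u'/\rho$. Then
\[
\frac{\|T_0f\|^2}{\|f\|^2}=\frac{4\int_r^R u^2\rho^{-1}\,d\rho}{\int_r^R (u')^2\rho^{-1}\,d\rho}.
\]
Hence $\|T_0\|^2=4/\nu$, where $\nu$ is the bottom of the spectrum of $-(u'/\rho)'=\nu u/\rho$ with $u(r)=0$ and the natural boundary condition $u'(R)=0$. This problem has compact resolvent, so $\nu$ is an eigenvalue.

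Now substitute $u=\rho V'$. Then $u'/\rho=V''+V'/\rho=\Delta V$, and the Euler--Lagrange equation becomes $\Delta V=-\nu V$, using the additive constant in $V$ to normalize. The condition $u(r)=0$ is the Neumann condition $V'(r)=0$, and $u'(R)=0$ is the Dirichlet condition $V(R)=0$. Conversely, any radial ND eigenfunction produces such a $u$. Since the first ND eigenfunction of the annulus is radial (it is positive and simple, hence rotation invariant), we get $\nu=\mu_1^{ND}(r,R)$ and $\|T_0\|=2/\sqrt{\mu_1^{ND}}$. This gives the lower bound, and extremizers of the form $f=\Delta V\cdot(\text{Bessel combination})$.

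\textbf{Step 3: the other modes (main obstacle).} It remains to show $\|T_k\|\le\|T_0\|$ for every $k\neq0$. The same substitution works in each mode. For $k\le0$, set $u=\int_r^\rho s^{1-k}f$; this yields a Rayleigh quotient for a Bessel-type operator of order $|k|$, with a Dirichlet condition at $r$ for $u$ and a natural condition at $R$. Transforming back as in Step 2, the corresponding $V$ is a mode-$|k|$ eigenfunction (angular dependence $e^{ik\theta}$) of the ND Laplacian. Thus $\|T_k\|^2=4/\mu^{ND}_{(|k|)}$, where $\mu^{ND}_{(|k|)}$ denotes the lowest ND eigenvalue restricted to that angular mode, and $\mu^{ND}_{(|k|)}\ge\mu_1^{ND}$ because the centrifugal term $k^2/\rho^2$ only increases the Rayleigh quotient.

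For $k\ge1$ the integral runs from $\rho$ to $R$, so $u$ vanishes at $R$ instead of at $r$. I would pass to the adjoint $T_k^*$, which is of the first type with the roles of $r$ and $R$ exchanged, and identify $\|T_k\|^2$ with $4/\mu$ for an eigenvalue $\mu$ of mode $k-1$ or $k$. The delicate case is $k=1$: there the angular number may be $0$ with possibly a different mix of boundary conditions. For this case I would try to compare directly, in the variable $\rho V'$, against the ND radial Rayleigh quotient, or failing that verify the inequality through the Bessel determinant together with interlacing of zeros. Taking the supremum over $k$ then gives Theorem~\ref{thm:intro-main}.
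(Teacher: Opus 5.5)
\textbf{Verdict: genuine gap.} Your proposal proves the lower bound but not the upper bound. The blocks $k\ge1$ are left open.

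Your Steps~1 and~2 are sound. The mode formulas agree with Lemma~\ref{lem:mode-action}, and the substitution $u=\rho V'$ correctly gives $\|T_0\|=2/\sqrt{\mu_1^{ND}}$; in fact the radial ND eigenfunction itself is an extremizer. That settles the lower bound.

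The gap is in Step~3, and $k=1$ is exactly where the theorem could fail. If that block produced Dirichlet at $r$ and Neumann at $R$, the result would be false: as $r\to0$ the radial DN eigenvalue tends to $0$, while $\mu_1^{ND}\to j_{0,1}^2/R^2$. The missing idea is the adjoint identity. By Fubini, for $k\ge1$,
\[
T_k^*h(t)=-2t^{-k}\int_r^t\rho^k h(\rho)\,d\rho=-(T_{1-k}h)(t),
\]
or equivalently $C_A^*f=-\overline{C_A\bar f}$. So $T_k^*$ is of your first type with the \emph{same} endpoints, not with $r$ and $R$ exchanged, and $\|T_k\|=\|T_{1-k}\|$. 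In particular $\|T_1\|=\|T_0\|$. Your ``delicate'' case therefore follows immediately from Step~2, every $k\ge1$ reduces to $k\le0$, and no interlacing argument is needed.

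The paper runs the argument in the opposite direction. For $k\ge1$ it sets $F(\rho)=\int_\rho^R t^{1-k}g(t)\,dt$, so $F(R)=0$ with natural condition $F'(r)=0$. For $k=1$ this is literally the radial ND problem in $F$. For $k\ge2$, the substitution $U=\rho^{k-1}F$ only adds nonnegative terms (Lemma~\ref{lem:monotone}). The case $k\le0$ then follows by adjointness.

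Your treatment of $k=-n\le-1$ also contains a false identification, although its conclusion survives. The literal substitution $u=\rho V'$ does not turn the Euler--Lagrange equation into Bessel's equation of order $n$. The correct one is $u=\rho^{2n+1}(\rho^{-n}V)'=\rho^n(\rho V'-nV)$, whose kernel is spanned by $\rho^n$; this plays the role of the additive constant. With it, $V$ solves Bessel's equation of order $n$ with $V(R)=0$. However, $u(r)=0$ becomes the Robin condition $rV'(r)=nV(r)$, not the Neumann condition $V'(r)=0$. Thus $\|T_{-n}\|^2=4/\mu$ with
\[
\mu=\inf_{V(R)=0,\ V\neq0}\frac{\int_r^R\bigl(|V'|^2\rho+n^2|V|^2/\rho\bigr)\,d\rho+n|V(r)|^2}{\int_r^R|V|^2\rho\,d\rho},
\]
which is strictly larger than the mode-$n$ ND eigenvalue when $n\ge1$. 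The inequality $\|T_{-n}\|\le\|T_0\|$ still holds, because both the centrifugal term and the boundary term are nonnegative. This is exactly the computation of Lemma~\ref{lem:monotone}, transported through $T_{-n}^*=-T_{n+1}$.
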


The proof exploits rotational symmetry: a Fourier decomposition reduces
\(C_{A(r,R)}\) to a family of one-dimensional weighted Hardy-type operators on
radial profiles. A variational argument then yields a Sturm--Liouville problem
whose first eigenvalue controls the sharp operator norm; the Bessel determinant
arises from solving the radial eigenvalue equation explicitly.
The extremizers are given in terms of the corresponding mixed eigenfunction,
revealing a structural distinction between simply and doubly connected domains.
\subsection{Background and related work}
Norm bounds and spectral properties of the Cauchy transform and related
operators have been studied from several perspectives.
Mapping properties on bounded domains were initiated in
\cite{AndersonHinkkanen1989}, while a spectral approach to potential-theoretic
integral operators was developed in \cite{AndersonKhavinsonLomonosov1992}.
Sharp \(L^2\) estimates and their relation to spectral data appear in
\cite{ArazyKhavinson1992} and subsequent work of Dostani\'c
\cite{Dostanic1996,Dostanic1998PLMS,Dostanic2005IEOT}.
Connections with Poisson-type problems and quantitative PDE estimates were
studied in \cite{Kalaj2012AIM,Kalaj2012IEOT}, with further norm estimates for
Cauchy-type operators in \cite{ZhuKalaj2020JFA} and sharp \(L^p\)-theory on the
disk in \cite{KalajMelentijevicZhu2022JFA}.
Related spectral questions on Bergman-type spaces and in doubly connected
geometry can be found in \cite{Vujadinovic2016JMAA,Vujadinovic2025PA}.

\section{The proof of main results}
\begin{lemma}[Exact action on a single Fourier mode]\label{lem:mode-action}
Let $A=A(r,R)=\{z:\ r<|z|<R\}$ and
\[
(C_A f)(z)=\frac1\pi\int_A \frac{f(w)}{z-w}\,dA(w).
\]
Fix $m\in\mathbb Z$ and write $w=\rho e^{i\phi}$, $z=\rho_0 e^{i\theta}$.
If
\[
f(w)=g(\rho)e^{im\phi},
\]
then $C_A f$ lies in the $(m-1)$-st angular mode and is given by
\[
(C_A f)(\rho_0 e^{i\theta})
=
\begin{cases}
-2\,\rho_0^{m-1}e^{i(m-1)\theta}\displaystyle\int_{\rho_0}^{R} g(\rho)\,\rho^{1-m}\,d\rho,
& m\ge 1,\\[2.0ex]
\ \ 2\,\rho_0^{m-1}e^{i(m-1)\theta}\displaystyle\int_{r}^{\rho_0} g(\rho)\,\rho^{1-m}\,d\rho,
& m\le 0.
\end{cases}
\]
\end{lemma}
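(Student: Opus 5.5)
The plan is to compute $C_Af$ directly in polar coordinates. First integrate in the angle $\phi$ for fixed radius $\rho$ by expanding the Cauchy kernel in a geometric series, then integrate in $\rho$. Write
\[
(C_Af)(z)=\frac1\pi\int_r^R g(\rho)\,\rho\left(\int_0^{2\pi}\frac{e^{im\phi}}{z-\rho e^{i\phi}}\,d\phi\right)d\rho ,
\]
with $z=\rho_0e^{i\theta}$. This use of Fubini is legitimate for $g$ with $\int_r^R|g|^2\rho\,d\rho<\infty$. Indeed, $w\mapsto 1/|z-w|$ is integrable on the bounded set $A$, and $\rho$ ranges over $[r,R]$ with $r>0$, so every weight $\rho^{1-m}$ is bounded there. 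Hence $f\in L^2(A)\subset L^1(A)$, and for every $z$ the double integral converges absolutely. The single circle $\rho=\rho_0$ is a null set in $\rho$ and can be ignored.

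The key step is the inner integral $I_m(\rho,z)=\int_0^{2\pi}e^{im\phi}(z-\rho e^{i\phi})^{-1}\,d\phi$. I split into two cases.

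If $\rho<\rho_0$, I expand $\frac1{z-w}=\sum_{k\ge0}w^k z^{-k-1}$, which converges uniformly in $\phi$. Then $w^ke^{im\phi}=\rho^ke^{i(k+m)\phi}$, and only the term $k=-m$ survives integration. Thus $I_m=2\pi\rho^{-m}z^{m-1}$ when $m\le0$, and $I_m=0$ when $m\ge1$.

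If $\rho>\rho_0$, I expand $\frac1{z-w}=-\sum_{k\ge0}z^k w^{-k-1}$. Now $w^{-k-1}e^{im\phi}=\rho^{-k-1}e^{i(m-k-1)\phi}$, and only $k=m-1$ survives. Thus $I_m=-2\pi\rho^{-m}z^{m-1}$ when $m\ge1$, and $I_m=0$ when $m\le0$.

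It remains to substitute back. Multiplying by $g(\rho)\rho/\pi$ and integrating in $\rho$ gives, for $m\ge1$, contributions only from $\rho_0<\rho<R$:
\[
(C_Af)(z)=-2z^{m-1}\int_{\rho_0}^R g(\rho)\rho^{1-m}\,d\rho .
\]
For $m\le0$, contributions come only from $r<\rho<\rho_0$:
\[
(C_Af)(z)=2z^{m-1}\int_r^{\rho_0} g(\rho)\rho^{1-m}\,d\rho .
\]
Writing $z^{m-1}=\rho_0^{m-1}e^{i(m-1)\theta}$ gives the stated formulas. The $\theta$-dependence is exactly $e^{i(m-1)\theta}$, so $C_Af$ lies in the $(m-1)$-st mode.

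The only delicate point is the justification of the interchanges: Fubini, and termwise integration of the series. For fixed $\rho\neq\rho_0$ the geometric series converges uniformly in $\phi$, so termwise integration is fine. Absolute integrability of the kernel against $f$ handles the outer interchange. I do not expect a genuine obstacle here, only bookkeeping of signs and of the index $k=-m$ versus $k=m-1$.
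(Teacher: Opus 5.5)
Your proposal is correct and follows the paper's proof. Like the paper, you split according to $|w|<|z|$ or $|w|>|z|$, expand the kernel as a geometric series, let the $\phi$-integration select the single term $k=-m$ (for $m\le 0$) or $k=m-1$ (for $m\ge 1$), and then integrate radially; your signs and constants are right.

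There is one slip, in the Fubini justification that you added (the paper gives none). The facts $f\in L^1(A)$ and $1/|z-w|\in L^1(A)$ do not together imply $|f(w)|/|z-w|\in L^1(A)$. There are two ways to repair this:
\begin{itemize}
\item Apply Tonelli in $(z,w)$ to get absolute convergence for a.e.\ $z$. This suffices for an identity between $L^2$ functions.
\item Note that $\int_0^{2\pi}|z-\rho e^{i\phi}|^{-1}\,d\phi=O\bigl(1+\bigl|\log|\rho-\rho_0|\bigr|\bigr)$ lies in $L^2(d\rho)$. Cauchy--Schwarz in $\rho$ then gives absolute convergence for every $z$.
\end{itemize}
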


\begin{proof}
Use the geometric-series expansions of $(z-w)^{-1}$:
\[
\frac1{z-w}=
\frac1z\sum_{n\ge 0}\Big(\frac{w}{z}\Big)^n,\quad |w|<|z|,
\qquad
\frac1{z-w}=
-\frac1w\sum_{n\ge 0}\Big(\frac{z}{w}\Big)^n,\quad |w|>|z|.
\]
Insert $w=\rho e^{i\phi}$, $z=\rho_0 e^{i\theta}$ and multiply by $e^{im\phi}$.
Integrating in $\phi$ over $[0,2\pi]$ kills all terms except the unique one
whose angular frequency is $0$; this occurs only when $n=-m$ in the case
$|w|<|z|$ (hence $m\le 0$), and only when $n=m-1$ in the case $|w|>|z|$
(hence $m\ge 1$). The stated formulas follow after the remaining radial
integration.
\end{proof}

\begin{lemma}[Weighted Hardy reduction for $m\ge 1$]\label{lem:hardy-m}
Fix $m\ge 1$ and set
\[
(T_m g)(\rho):=-2\,\rho^{m-1}\int_{\rho}^{R} g(t)\,t^{1-m}\,dt,
\qquad r<\rho<R.
\]
Then $\|C_A\|_{H_m\to H_{m-1}}=\|T_m\|$ when both $H_m,H_{m-1}$ are equipped
with the $L^2(A)$ norm.
Moreover, if $F(\rho):=\int_{\rho}^{R} g(t)\,t^{1-m}\,dt$ (so $F(R)=0$ and
$F'(\rho)=-g(\rho)\rho^{1-m}$), then
\[
\frac{\|T_m g\|_{L^2((r,R),\rho\,d\rho)}^2}{\|g\|_{L^2((r,R),\rho\,d\rho)}^2}
=
\frac{4\int_r^R |F(\rho)|^2\,\rho^{2m-1}\,d\rho}{\int_r^R |F'(\rho)|^2\,\rho^{2m-1}\,d\rho}.
\]
Hence
\[
\|T_m\|=\frac{2}{\sqrt{\mu_m}},
\]
where $\mu_m$ is the first eigenvalue of
\begin{equation}\label{eq:SL-m}
-(\rho^{2m-1}F'(\rho))'=\mu\,\rho^{2m-1}F(\rho),\quad r<\rho<R,
\qquad F'(r)=0,\ \ F(R)=0.
\end{equation} 
\end{lemma}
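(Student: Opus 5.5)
The plan is to prove the three assertions of Lemma~\ref{lem:hardy-m} in turn. I will use Lemma~\ref{lem:mode-action} for the reduction, a change of variables for the Rayleigh quotient, and a standard Sturm--Liouville variational argument for the norm.

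\emph{Step 1: isometric reduction.} By Lemma~\ref{lem:mode-action}, for $f=g(\rho)e^{im\phi}\in H_m$ with $m\ge1$ we have $C_Af=(T_mg)(\rho_0)e^{i(m-1)\theta}\in H_{m-1}$. For any function of the form $h(\rho)e^{ik\phi}$,
\[
\|h(\rho)e^{ik\phi}\|_{L^2(A)}^2=2\pi\int_r^R|h|^2\rho\,d\rho .
\]
Hence the map $g\mapsto ge^{im\phi}$ is, up to the common factor $\sqrt{2\pi}$, an isometry of $L^2((r,R),\rho\,d\rho)$ onto $H_m$, and likewise for $H_{m-1}$. It follows that $\|C_A\|_{H_m\to H_{m-1}}=\|T_m\|$ on $L^2((r,R),\rho\,d\rho)$.

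\emph{Step 2: the quotient identity.} Set $F(\rho)=\int_\rho^R g\,t^{1-m}\,dt$. Then $T_mg=-2\rho^{m-1}F$, so
\[
\int_r^R|T_mg|^2\rho\,d\rho=4\int_r^R|F|^2\rho^{2m-1}\,d\rho .
\]
Also $g=-\rho^{m-1}F'$, so
\[
\int_r^R|g|^2\rho\,d\rho=\int_r^R|F'|^2\rho^{2m-1}\,d\rho .
\]
Since $\rho$ ranges over $[r,R]$ with $r>0$, the weights are bounded above and below. Therefore $g\mapsto F$ is a bijection from $L^2((r,R),\rho\,d\rho)$ onto $\{F\in H^1(r,R):F(R)=0\}$, and $F$ is absolutely continuous.

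\emph{Step 3: identifying the supremum.} By Step 2,
\[
\|T_m\|^2=4\sup_F\frac{\int|F|^2w}{\int|F'|^2w},\qquad w=\rho^{2m-1},
\]
where the supremum runs over $F\in H^1$ with $F(R)=0$ and no constraint at $r$. So $\|T_m\|^2=4/\mu$, where
\[
\mu=\inf_F\frac{\int|F'|^2w}{\int|F|^2w}
\]
over the same class. This class is exactly the form domain of the Sturm--Liouville operator $-w^{-1}(wF')'$ with a Dirichlet condition at $R$ and a natural (free) condition at $r$. It is a regular problem, since $w$ is smooth and positive on $[r,R]$.

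The compact embedding $H^1\hookrightarrow L^2$ on a bounded interval gives a minimizer, via a minimizing sequence and weak lower semicontinuity. The first variation then yields
\[
\int wF'\bar\psi'=\mu\int wF\bar\psi\quad\text{for all admissible }\psi .
\]
Test first with $\psi$ compactly supported to obtain the ODE $-(wF')'=\mu wF$. Then test with $\psi$ free at $r$ and integrate by parts; the boundary term $w(r)F'(r)\overline{\psi(r)}$ must vanish, and since $w(r)>0$ this gives $F'(r)=0$.

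Conversely, every eigenvalue of \eqref{eq:SL-m} is at least the infimum $\mu$, so $\mu$ is precisely the first eigenvalue $\mu_m$. Hence $\|T_m\|=2/\sqrt{\mu_m}$.

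The main obstacle I expect is the rigor of the bijection in Step 2, that is, checking that every admissible $F$ arises from some $g\in L^2$ so that the supremum is over the full $H^1$ class with $F(R)=0$, together with the emergence of the natural Neumann condition at $r$ in Step 3. Both are handled by regularity of the problem on $[r,R]$ with $r>0$.
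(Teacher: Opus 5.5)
Your proposal is correct and follows essentially the same route as the paper: the isometric reduction of the mode-$m$ block to $T_m$ on $L^2((r,R),\rho\,d\rho)$, the substitution $F(\rho)=\int_\rho^R g(t)t^{1-m}\,dt$ giving the Rayleigh quotient, and a first-variation argument producing $-(\rho^{2m-1}F')'=\mu\rho^{2m-1}F$ with the natural condition $F'(r)=0$. Your additions supply rigor that the paper's Lagrange-multiplier derivation takes for granted, without changing the approach: the bijection $g\leftrightarrow F$ onto $\{F\in H^1(r,R):F(R)=0\}$, existence of a minimizer by compactness, and the check that every eigenvalue is at least the infimum.
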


\begin{proof}
The mode formula in Lemma~\ref{lem:mode-action} shows that the restriction
of $C_A$ to mode $m$ is exactly $T_m$ on the radial profile.
The $L^2(A)$ norm of $g(\rho)e^{im\theta}$ is $2\pi\int_r^R |g(\rho)|^2\rho\,d\rho$,
and similarly for the output mode, so the operator norm reduces to $\|T_m\|$.
The Rayleigh quotient in $F$ is immediate from $T_m g = -2\rho^{m-1}F$ and
$g=-\rho^{m-1}F'$, and the Euler--Lagrange equation yields \eqref{eq:SL-m}
with the natural boundary condition $F'(r)=0$. Here are the details of the last step: % --- Step (3): deriving the Sturm--Liouville problem and boundary conditions ---

Let \(w(\rho):=\rho^{2m-1}\).  From the previous step we arrive at the Rayleigh
quotient
\[
\frac{\|T_m g\|_{L^2((r,R),\rho\,d\rho)}^2}{\|g\|_{L^2((r,R),\rho\,d\rho)}^2}
=
\frac{4\int_r^R |F(\rho)|^2\,w(\rho)\,d\rho}{\int_r^R |F'(\rho)|^2\,w(\rho)\,d\rho}.
\]
Thus maximizing the left-hand side is equivalent to minimizing
\[
\mathcal{R}[F]:=\frac{\int_r^R |F'(\rho)|^2\,w(\rho)\,d\rho}
{\int_r^R |F(\rho)|^2\,w(\rho)\,d\rho}
\quad\text{over }F\not\equiv 0\text{ with }F(R)=0.
\]
Introduce a Lagrange multiplier \(\mu\) and consider
\[
\mathcal{J}[F]:=\int_r^R\Big(|F'(\rho)|^2-\mu\,|F(\rho)|^2\Big)\,w(\rho)\,d\rho,
\qquad F(R)=0.
\]
Take a variation \(F_\varepsilon=F+\varepsilon \varphi\) with \(\varphi(R)=0\).
Then
\[
\left.\frac{d}{d\varepsilon}\right|_{\varepsilon=0}\mathcal{J}[F_\varepsilon]
=
2\int_r^R \big(F'(\rho)\varphi'(\rho)-\mu F(\rho)\varphi(\rho)\big)\,w(\rho)\,d\rho.
\]
Integrating by parts gives
\[
2\int_r^R F' \varphi'\,w\,d\rho
=
2\Big[w(\rho)F'(\rho)\varphi(\rho)\Big]_{r}^{R}
-2\int_r^R (w(\rho)F'(\rho))'\,\varphi(\rho)\,d\rho,
\]
hence
\[
\left.\frac{d}{d\varepsilon}\right|_{\varepsilon=0}\mathcal{J}[F_\varepsilon]
=
2\Big[w(\rho)F'(\rho)\varphi(\rho)\Big]_{r}^{R}
-2\int_r^R\Big((wF')' + \mu w F\Big)\varphi(\rho)\,d\rho.
\]
Since \(\varphi(R)=0\), the boundary term at \(R\) vanishes.  At \(\rho=r\) the
value \(F(r)\) is free, so \(\varphi(r)\) is arbitrary; therefore the remaining
boundary term forces
\[
w(r)F'(r)=0 \quad\Longrightarrow\quad F'(r)=0
\qquad(\text{since }w(r)=r^{2m-1}>0).
\]
Because \(\varphi\) is arbitrary in \((r,R)\), the integral term yields the
Euler--Lagrange equation
\[
(w(\rho)F'(\rho))' + \mu\,w(\rho)F(\rho)=0
\quad\Longleftrightarrow\quad
-(\rho^{2m-1}F'(\rho))'=\mu\,\rho^{2m-1}F(\rho),
\]
together with the boundary conditions \(F'(r)=0\) and \(F(R)=0\).  This is
exactly the Sturm--Liouville problem \eqref{eq:SL-m}.

\end{proof}
\paragraph{Bessel functions and the characteristic equation.}
The symbols \(J_k\) and \(Y_k\) denote the Bessel functions of the first and
second kind (Neumann functions) of order \(k\). They form a fundamental pair of
linearly independent solutions of Bessel's differential equation
\[
x^2 y''(x)+x y'(x)+\bigl(x^2-k^2\bigr)y(x)=0.
\]
The function \(J_k\) is the solution that is finite at \(x=0\), while \(Y_k\) is
the second independent solution, which is singular at \(x=0\) for \(k\ge 0\).
In the present annulus problem, separation of variables reduces the mixed
boundary-value eigenproblem for the Laplacian to a radial Bessel ODE. Imposing
the Neumann condition at \(|z|=r\) and the Dirichlet condition at \(|z|=R\)
produces a nontrivial solution precisely when a certain Bessel determinant
vanishes; for the principal mode this takes the form
\[
J_1(\kappa r)Y_0(\kappa R)-Y_1(\kappa r)J_0(\kappa R)=0,
\]
and the smallest positive root \(\kappa\) determines the sharp norm constant via
\(\|C_{A(r,R)}\|_{2\to2}=2/\kappa\).

\begin{lemma}[Bessel form for $m\ge 1$]\label{lem:bessel-m}
Let $m\ge 1$ and write $\mu_m=\kappa_{m,1}^2$ with $\kappa_{m,1}>0$.
Then $\kappa_{m,1}$ is the smallest positive solution of
\begin{equation}\label{eq:kappa-m}
J_m(\kappa r)\,Y_{m-1}(\kappa R)-Y_m(\kappa r)\,J_{m-1}(\kappa R)=0,
\end{equation}
and
\[
\|C_A\|_{H_m\to H_{m-1}}=\|T_m\|=\frac{2}{\kappa_{m,1}}.
\]
In particular, for $m=1$ one gets \eqref{eq:kappa-m} as
\[
J_1(\kappa r)\,Y_0(\kappa R)-Y_1(\kappa r)\,J_0(\kappa R)=0.
\]
\end{lemma}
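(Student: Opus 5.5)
Starting from Lemma~\ref{lem:hardy-m}, I will solve the Sturm--Liouville problem \eqref{eq:SL-m} explicitly. Expanding gives $F''+\frac{2m-1}{\rho}F'+\mu F=0$, and I substitute $F(\rho)=\rho^{1-m}u(\rho)$. A direct computation, using $(2m-1)(1-m)+(1-m)(-m)=(1-m)(m-1)$, turns this into
\[
\rho^2u''+\rho u'+\bigl(\mu\rho^2-(m-1)^2\bigr)u=0 .
\]
This is Bessel's equation of order $m-1$ in the variable $\kappa\rho$, where $\mu=\kappa^2$ and $\kappa>0$. Note that $\mu>0$ automatically, since the Rayleigh quotient is positive and $F\not\equiv0$ with $F(R)=0$ forces $F'\not\equiv0$. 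Hence
\[
F(\rho)=\rho^{1-m}\bigl(aJ_{m-1}(\kappa\rho)+bY_{m-1}(\kappa\rho)\bigr).
\]

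Next I impose the boundary conditions. I will use the standard identity $\frac{d}{dx}\bigl(x^{-\nu}Z_\nu(x)\bigr)=-x^{-\nu}Z_{\nu+1}(x)$, valid for $Z=J,Y$, with $\nu=m-1$. Writing $\rho^{1-m}Z_{m-1}(\kappa\rho)=\kappa^{m-1}(\kappa\rho)^{-(m-1)}Z_{m-1}(\kappa\rho)$, this gives
\[
F'(\rho)=-\kappa\rho^{1-m}\bigl(aJ_m(\kappa\rho)+bY_m(\kappa\rho)\bigr).
\]
The two conditions then read
\[
F'(r)=0:\quad aJ_m(\kappa r)+bY_m(\kappa r)=0, \qquad F(R)=0:\quad aJ_{m-1}(\kappa R)+bY_{m-1}(\kappa R)=0.
\]
A nontrivial pair $(a,b)$ exists if and only if the determinant $J_m(\kappa r)Y_{m-1}(\kappa R)-Y_m(\kappa r)J_{m-1}(\kappa R)$ vanishes. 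This is exactly \eqref{eq:kappa-m}. The Wronskian of $J,Y$ is nonzero, so $(J_{m-1},Y_{m-1})$ are independent, and every eigenfunction arises this way.

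It remains to justify that $\mu_m$, the bottom of the Rayleigh quotient, really is the smallest $\kappa^2$ with $\kappa$ a positive root, and that it is attained. This is the step needing care. The problem is a regular Sturm--Liouville problem on $[r,R]$ with $r>0$, since the weight $\rho^{2m-1}$ is smooth and positive there. Standard theory therefore gives a discrete sequence of eigenvalues $0<\mu^{(1)}<\mu^{(2)}<\dots$. The infimum of the quotient over $H^1$ functions with $F(R)=0$ equals $\mu^{(1)}$ and is attained at the eigenfunction, by compactness of $H^1\hookrightarrow L^2$ on a bounded interval. Every eigenvalue is of the form $\kappa^2$ with $\kappa$ a positive root of \eqref{eq:kappa-m}, and conversely, so $\mu_m=\kappa_{m,1}^2$.

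Finally, the sup of $\|T_mg\|^2/\|g\|^2$ equals $4/\mu_m$. The map $g\mapsto F$ is a bijection from $L^2(\rho\,d\rho)$ onto $\{F:F(R)=0,\ \int|F'|^2\rho^{2m-1}<\infty\}$, with $g=-\rho^{m-1}F'$. Therefore $\|T_m\|=2/\kappa_{m,1}$, and Lemma~\ref{lem:hardy-m} gives $\|C_A\|_{H_m\to H_{m-1}}=\|T_m\|$. Setting $m=1$ yields the displayed special case.
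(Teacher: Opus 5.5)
Your proposal is correct and follows essentially the same route as the paper. The substitution $F=\rho^{1-m}u$ (the paper's $U=\rho^{m-1}F$) turns \eqref{eq:SL-m} into Bessel's equation of order $m-1$; the recurrence $\frac{d}{dx}\bigl(x^{-\nu}Z_\nu\bigr)=-x^{-\nu}Z_{\nu+1}$ (equivalent to the paper's $xZ_{m-1}'-(m-1)Z_{m-1}=-xZ_m$) converts $F'(r)=0$ into $aJ_m(\kappa r)+bY_m(\kappa r)=0$; and the vanishing determinant gives \eqref{eq:kappa-m}, with your regular Sturm--Liouville argument (all eigenvalues positive, the Rayleigh infimum attained and equal to the square of the smallest positive root) making explicit a step the paper leaves implicit.
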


\begin{proof}
Set $U(\rho):=\rho^{m-1}F(\rho)$. Then \eqref{eq:SL-m} is equivalent to the
Bessel-type equation
\[
U''(\rho)+\frac1\rho U'(\rho)+\Big(\kappa^2-\frac{(m-1)^2}{\rho^2}\Big)U(\rho)=0,
\]
so $U(\rho)=aJ_{m-1}(\kappa\rho)+bY_{m-1}(\kappa\rho)$.
The boundary condition $F(R)=0$ becomes $U(R)=0$.
The condition $F'(r)=0$ becomes $rU'(r)=(m-1)U(r)$, which is equivalent to
$aJ_m(\kappa r)+bY_m(\kappa r)=0$ by the standard identity
$xJ'_{m-1}(x)-(m-1)J_{m-1}(x)=-xJ_m(x)$ (and the analogous one for $Y$).
The existence of a nontrivial $(a,b)$ gives \eqref{eq:kappa-m}.
Finally, Lemma~\ref{lem:hardy-m} gives $\|T_m\|=2/\sqrt{\mu_m}=2/\kappa_{m,1}$.
\end{proof}

\begin{lemma}\label{lem:monotone}
For $m\ge 1$, let $\mu_m$ be the first eigenvalue of \eqref{eq:SL-m}.
Then $\mu_1\le \mu_m$ for all $m\ge 1$, and hence
\[
\sup_{m\ge 1}\|C_A\|_{H_m\to H_{m-1}}=\|C_A\|_{H_1\to H_0}=\frac{2}{\sqrt{\mu_1}}
=\frac{2}{\kappa_{1,1}}.
\]
\end{lemma}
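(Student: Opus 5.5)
Since $\|T_m\| = 2/\sqrt{\mu_m}$ by Lemma~\ref{lem:hardy-m}, it suffices to show $\mu_1 \le \mu_m$ for every $m\ge 1$. We compare the two problems through their Rayleigh quotients rather than through the Bessel determinants. By Lemma~\ref{lem:hardy-m},
\[
\mu_m=\inf\Big\{\frac{\int_r^R |F'|^2\rho^{2m-1}\,d\rho}{\int_r^R |F|^2\rho^{2m-1}\,d\rho}:\ F\not\equiv 0,\ F(R)=0\Big\}.
\]
The aim is to turn an admissible function for $\mu_m$ into an admissible function for $\mu_1$ whose quotient is no larger.

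We pass to the variable $U=\rho^{m-1}F$, as in the proof of Lemma~\ref{lem:bessel-m}. Write $k=m-1\ge 0$. Then $\int|F|^2\rho^{2m-1}\,d\rho=\int |U|^2\rho\,d\rho$, and
\[
F'=\rho^{-k}\bigl(U'-kU/\rho\bigr).
\]
Hence the numerator equals
\[
\int_r^R |U'-kU/\rho|^2\rho\,d\rho=\int_r^R\Bigl(|U'|^2+\frac{k^2}{\rho^2}|U|^2\Bigr)\rho\,d\rho-k\int_r^R (|U|^2)'\,d\rho .
\]
The last integral equals $k\bigl(|U(R)|^2-|U(r)|^2\bigr)=-k|U(r)|^2$, since $U(R)=0$. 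Therefore
\[
\int_r^R |F'|^2\rho^{2m-1}\,d\rho=\int_r^R\Bigl(|U'|^2+\frac{k^2}{\rho^2}|U|^2\Bigr)\rho\,d\rho+k|U(r)|^2 .
\]
Both extra terms are nonnegative, so this is at least $\int_r^R|U'|^2\rho\,d\rho$.

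The function $U$ satisfies $U(R)=0$, so it is admissible for $\mu_1$, whose weight is $\rho^{1}$. Consequently the Rayleigh quotient of $F$ for $\mu_m$ is at least the Rayleigh quotient of $U$ for $\mu_1$, which is at least $\mu_1$. Taking the infimum over $F$ gives $\mu_m\ge\mu_1$.

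It follows that $\sup_{m\ge1}\|C_A\|_{H_m\to H_{m-1}}=2/\sqrt{\mu_1}$, with the supremum attained at $m=1$. By Lemma~\ref{lem:bessel-m}, $\sqrt{\mu_1}=\kappa_{1,1}$.

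The only step that needs care is the boundary term from the integration by parts. Its sign is favourable precisely because the Dirichlet condition sits at the outer radius $R$ and $k\ge 0$; I would verify this sign carefully. We also need the correspondence $F\leftrightarrow U$ to preserve the admissible class. This holds because multiplying by $\rho^{\pm k}$ is a bounded, smooth operation on the compact interval $[r,R]$, where $\rho\ge r>0$. No regularity beyond $H^1$ is required.
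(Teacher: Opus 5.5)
Your argument is correct and is essentially the paper's proof. It uses the same substitution $U=\rho^{m-1}F$, the same identity $\int_r^R|F'|^2\rho^{2m-1}\,d\rho=\int_r^R\bigl(|U'|^2\rho+\tfrac{(m-1)^2}{\rho}|U|^2\bigr)\,d\rho+(m-1)|U(r)|^2$, and the same comparison with the $m=1$ Rayleigh quotient over the common class $\{U\in H^1: U(R)=0\}$. One small wording slip: $\int_r^R(|U|^2)'\,d\rho$ equals $-|U(r)|^2$, not $k\bigl(|U(R)|^2-|U(r)|^2\bigr)$, so the term $-k\int_r^R(|U|^2)'\,d\rho$ contributes $+k|U(r)|^2$, which is what your final display correctly states.
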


\begin{proof}
Rewrite the quotient in Lemma~\ref{lem:hardy-m} using $U(\rho)=\rho^{m-1}F(\rho)$:
a direct computation gives
\[
\int_r^R |F'(\rho)|^2\rho^{2m-1}\,d\rho
=
\int_r^R\Big(|U'(\rho)|^2\rho+\frac{(m-1)^2}{\rho}|U(\rho)|^2\Big)\,d\rho
+(m-1)|U(r)|^2,
\]
while
\[
\int_r^R |F(\rho)|^2\rho^{2m-1}\,d\rho=\int_r^R |U(\rho)|^2\rho\,d\rho.
\]
Therefore $\mu_m$ admits the variational characterization
\[
\mu_m
=\inf_{\substack{U\in H^1(r,R),\\ U(R)=0,\ U\neq 0}}
\frac{\displaystyle
\int_r^R\Big(|U'|^2\rho+\frac{(m-1)^2}{\rho}|U|^2\Big)\,d\rho
+(m-1)|U(r)|^2}
{\displaystyle\int_r^R |U|^2\rho\,d\rho}.
\]
For $m=1$ the numerator becomes $\int_r^R |U'|^2\rho\,d\rho$ (no potential term
and no boundary term). Hence for every admissible $U$,
\[
\frac{\int_r^R\Big(|U'|^2\rho+\frac{(m-1)^2}{\rho}|U|^2\Big)\,d\rho+(m-1)|U(r)|^2}
{\int_r^R |U|^2\rho\,d\rho}
\ \ge\
\frac{\int_r^R |U'|^2\rho\,d\rho}{\int_r^R |U|^2\rho\,d\rho}.
\]
Taking the infimum over the same class $\{U\in H^1:U(R)=0\}$ yields $\mu_m\ge \mu_1$.
\end{proof}

\begin{theorem}[Sharp \(L^2\)-norm on a circular annulus]\label{thm:norm-annulus}
Let \(0<r<R\), \(A=A(r,R)=\{z\in\C:\ r<|z|<R\}\), and
\[
(C_A f)(z)=\frac1\pi\int_A \frac{f(w)}{z-w}\,dA(w),\qquad f\in L^2(A).
\]
Then
\[
\|C_A\|_{L^2(A)\to L^2(A)}=\frac{2}{\kappa_1},
\]
where \(\kappa_1>0\) is the smallest positive solution of
\begin{equation}\label{eq:kappa1}
J_1(\kappa r)\,Y_0(\kappa R)-Y_1(\kappa r)\,J_0(\kappa R)=0 .
\end{equation}
\end{theorem}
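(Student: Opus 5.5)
The proof assembles the mode-by-mode information from Lemmas~\ref{lem:mode-action}--\ref{lem:monotone} into a global statement. Lemma~\ref{lem:mode-action} handles only a single mode, so I first decompose. For $f\in L^2(A)$, write $f=\sum_{m\in\mathbb Z} f_m$ with $f_m(\rho e^{i\phi})=g_m(\rho)e^{im\phi}$, which is an orthogonal decomposition $L^2(A)=\bigoplus_m H_m$. By Lemma~\ref{lem:mode-action}, $C_A$ maps $H_m$ into $H_{m-1}$. Distinct $m$ therefore produce mutually orthogonal outputs, and $m\mapsto m-1$ is a bijection of $\mathbb Z$. Hence
\[
\|C_Af\|^2=\sum_m\|C_Af_m\|^2\le \Big(\sup_m\|C_A\|_{H_m\to H_{m-1}}\Big)^2\sum_m\|f_m\|^2 ,
\]
with equality approached along single modes. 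So $\|C_A\|=\sup_{m\in\mathbb Z}\|C_A\|_{H_m\to H_{m-1}}$. A density argument (finite Fourier sums of continuous $f$) justifies applying the mode formula termwise, since $C_A$ is bounded on $L^2(A)$ (its kernel is integrable on a bounded domain).

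For $m\ge1$, Lemma~\ref{lem:monotone} already gives $\sup_{m\ge1}\|C_A\|_{H_m\to H_{m-1}}=2/\kappa_{1,1}$, where $\kappa_{1,1}$ is the smallest positive root of \eqref{eq:kappa1} by Lemma~\ref{lem:bessel-m}. It remains to show that the modes $m\le0$ do not exceed this. I will use an adjoint/conjugation symmetry rather than redoing the Sturm--Liouville analysis.

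From the kernel, for $h\in L^2(A)$ one has
\[
(C_A^*h)(w)=\frac1\pi\int_A\frac{h(z)}{\bar z-\bar w}\,dA(z)=-\overline{(C_A\bar h)(w)} .
\]
Now let $\mathcal K h=\bar h$ denote the antiunitary conjugation, which maps $H_k$ onto $H_{-k}$. Fix $m\le0$ and set $n=1-m\ge1$. Then $C_A|_{H_m}\colon H_m\to H_{m-1}$ has adjoint $C_A^*|_{H_{m-1}}\colon H_{m-1}\to H_m$, and this adjoint equals $-\mathcal K\,C_A\,\mathcal K$. Here $\mathcal K$ carries $H_{m-1}=H_{-n}$ to $H_n$, then $C_A$ carries $H_n$ to $H_{n-1}$, and $\mathcal K$ returns $H_{n-1}$ to $H_{1-n}=H_m$. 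Since $\mathcal K$ is isometric,
\[
\|C_A\|_{H_m\to H_{m-1}}=\|C_A\|_{H_{1-m}\to H_{-m}}=\frac{2}{\kappa_{1-m,1}}\le\frac{2}{\kappa_{1,1}} ,
\]
by Lemmas~\ref{lem:bessel-m} and~\ref{lem:monotone}. The case $m=0$ corresponds to $n=1$ and attains the same value.

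Combining the two ranges gives $\|C_A\|=2/\kappa_{1,1}=2/\kappa_1$. The supremum is attained, for instance by $f(w)=g(\rho)e^{i\phi}$ with $g=-F'$, where $F$ is the first eigenfunction of \eqref{eq:SL-m} for $m=1$. Its existence is guaranteed because \eqref{eq:SL-m} is a regular Sturm--Liouville problem on $[r,R]$ with positive weights, so the infimum in Lemma~\ref{lem:monotone} is a genuine minimum with discrete spectrum.

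The main point needing care is the adjoint identity and the bookkeeping of modes under conjugation, in particular checking the sign and that $\mathcal K C_A\mathcal K$ lands in the right mode. As a cross-check, I would verify it directly from Lemma~\ref{lem:mode-action}. For $k=1-m$ and $G(\rho)=\int_r^\rho g\,t^{k}\,dt$, the $m\le0$ Rayleigh quotient becomes $4\int|G|^2\rho^{1-2k}\big/\int|G'|^2\rho^{1-2k}$, now with the Dirichlet condition at $r$. Its extremal value should coincide with $4/\kappa_{k,1}^2$ by the duality between the Volterra operators $\int_\rho^R$ and $\int_r^\rho$.
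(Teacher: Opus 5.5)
Your proposal is correct and follows the paper's route: orthogonal Fourier-mode decomposition, reduction of each block to the weighted Hardy operator $T_m$, the monotonicity $\mu_m\ge\mu_1$ from Lemma~\ref{lem:monotone}, and the Bessel determinant from Lemma~\ref{lem:bessel-m}. The one place you do more than the paper is the blocks $m\le 0$, where the paper asserts $\|C_A\|_{H_m\to H_{m-1}}=\|T_{1-m}\|$ without proof (Lemma~\ref{lem:hardy-m} only covers $m\ge 1$); your identity $C_A^*=-\mathcal K C_A\mathcal K$, whose sign and mode bookkeeping are correct, actually establishes this.
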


\begin{proof}
By Lemmas~\ref{lem:mode-action} and \ref{lem:hardy-m},
\[
\|C_A\|=\sup_{m\in\mathbb Z}\|C_A\|_{H_m\to H_{m-1}},
\qquad
\|C_A\|_{H_m\to H_{m-1}}=
\begin{cases}
\|T_m\|,& m\ge 1,\\
\|T_{1-m}\|,& m\le 0,
\end{cases}
\]
and for \(m\ge 1\),
\[
\|T_m\|=\frac{2}{\sqrt{\mu_m}},
\]
where \(\mu_m\) is the first eigenvalue of \eqref{eq:SL-m}.
Lemma~\ref{lem:monotone} gives \(\mu_m\ge \mu_1\) for all \(m\ge 1\), hence the
supremum is attained at \(m=1\) and
\[
\|C_A\|=\|T_1\|=\frac{2}{\sqrt{\mu_1}}.
\]
Finally, Lemma~\ref{lem:bessel-m} (with \(m=1\)) yields \(\mu_1=\kappa_1^2\), where
\(\kappa_1\) is the smallest positive root of \eqref{eq:kappa1}. Therefore
\(\|C_A\|=2/\kappa_1\).

Moreover, if \(F>0\) is the first eigenfunction of \eqref{eq:SL-m} for \(m=1\) and
\(f(\rho e^{i\theta})=-F'(\rho)e^{i\theta}\), then Lemma~\ref{lem:mode-action} gives
\[
(C_A f)(\rho e^{i\theta})=2F(\rho),
\]
so \(f\) is an extremizer, see remark below for details.
\end{proof}
\begin{remark}[An explicit extremizer and the derivative computation]\label{rem:extremizer-annulus}
Let \(\kappa_1>0\) be the smallest positive solution of \eqref{eq:kappa1}. Define
\[
F(\rho):=c\Big( Y_0(\kappa_1 R)\,J_0(\kappa_1\rho)-J_0(\kappa_1 R)\,Y_0(\kappa_1\rho)\Big),
\qquad r<\rho<R,
\]
where \(c\neq 0\) is chosen so that \(F>0\) on \((r,R)\). Then \(F(R)=0\) holds by
construction. Moreover, using the identities \(J_0'(x)=-J_1(x)\) and
\(Y_0'(x)=-Y_1(x)\) together with the chain rule, we compute
\[
\begin{aligned}
F'(\rho)
&=c\Big( Y_0(\kappa_1 R)\,\frac{d}{d\rho}J_0(\kappa_1\rho)
        -J_0(\kappa_1 R)\,\frac{d}{d\rho}Y_0(\kappa_1\rho)\Big)\\
&=c\,\kappa_1\Big( Y_0(\kappa_1 R)\,J_0'(\kappa_1\rho)
        -J_0(\kappa_1 R)\,Y_0'(\kappa_1\rho)\Big)\\
&=c\,\kappa_1\Big(J_0(\kappa_1 R)\,Y_1(\kappa_1\rho)-Y_0(\kappa_1 R)\,J_1(\kappa_1\rho)\Big).
\end{aligned}
\]
In particular,
\[
F'(r)=0
\quad\Longleftrightarrow\quad
J_1(\kappa_1 r)\,Y_0(\kappa_1 R)-Y_1(\kappa_1 r)\,J_0(\kappa_1 R)=0,
\]
so \(\kappa_1\) is precisely the parameter for which the boundary conditions
\(F'(r)=0\) and \(F(R)=0\) are satisfied. Consequently,
\[
f(\rho e^{i\theta})=-F'(\rho)\,e^{i\theta}
\]
is an extremizer for \(\|C_A\|\) in Theorem~\ref{thm:norm-annulus}.
\end{remark}

% Remark: The case m<=0 is analogous (one gets a Hardy-type operator with the integral from r to rho),
% and the same conclusion holds: no negative mode yields a larger singular value than the m=1 block.
% Thus the full operator norm is 2/kappa_{1,1} with kappa_{1,1} determined by J_1(kr)Y_0(kR)-Y_1(kr)J_0(kR)=0.

\begin{remark}[On the graph of \(2/\kappa_{1,1}(r)\) for \(R=1\) and scaling in \(R\)]\label{rem:graph-scaling}
Theorem~\ref{thm:norm-annulus} gives
\[
\|C_{A(r,R)}\|_{2\to 2}=\frac{2}{\kappa_{1,1}(r,R)},
\]
where \(\kappa_{1,1}(r,R)\) is the smallest positive root of the transcendental
Bessel determinant \eqref{eq:kappa1}.  In general there is no closed-form
expression for \(\kappa_{1,1}(r,R)\) in terms of elementary functions, so an
\emph{explicit} formula for the function \(r\mapsto 2/\kappa_{1,1}(r,R)\) is not
available.  Nevertheless, for each fixed \(r\in(0,1)\) one can compute
\(\kappa_{1,1}(r,1)\) numerically by solving
\[
D_r(\kappa):=J_1(\kappa r)Y_0(\kappa)-Y_1(\kappa r)J_0(\kappa)=0
\]
for its smallest positive root (e.g.\ by bisection, Newton's method, or standard
special-function solvers).  This produces the graph of
\[
r\longmapsto \frac{2}{\kappa_{1,1}(r,1)},\qquad 0<r<1 (\text{see Figure~1}).
\]

It is enough to plot the case \(R=1\), since the problem has a simple scaling.
Indeed, under the dilation \(z=R\zeta\) the annulus \(A(r,R)\) is mapped onto
\(A(r/R,1)\), and a direct change of variables shows
\[
\|C_{A(r,R)}\|_{2\to 2}=R\,\|C_{A(r/R,1)}\|_{2\to 2}.
\]
Equivalently, writing \(a=r/R\in(0,1)\) and setting \(x=\kappa R\) in
\eqref{eq:kappa1}, one sees that the root depends only on the ratio \(a\):
\[
\kappa_{1,1}(r,R)=\frac{1}{R}\,\kappa_{1,1}(a,1),
\qquad a=\frac{r}{R}.
\]
Hence the graph for \(R=1\) is universal; all other radii \(R\) are obtained by
the above scaling. In the limiting case \(r=0\) and \(R=1\), the annulus \(A(r,R)\) becomes the unit disk.
Then \(\kappa_{1,1}(0,1)\) coincides with the smallest positive zero of \(J_0\).
Denoting this zero by \(\alpha=j_{0,1}\approx 2.4048255577\), we have
\[
\kappa_{1,1}(0,1)=\alpha,
\qquad\text{and hence}\qquad
\frac{2}{\kappa_{1,1}(0,1)}=\frac{2}{\alpha}\approx 0.8319.
\]
Consequently,
\[
\|C_{\D}\|_{L^2(\D)\to L^2(\D)}=\frac{2}{j_{0,1}},
\]
a value already obtained by Anderson and Hinkkanen \cite{AndersonHinkkanen1989}, (compare Figure 1 and  Figure~2).
\end{remark}

\begin{figure}[htbp]
  \centering
  \includegraphics[width=0.75\textwidth]{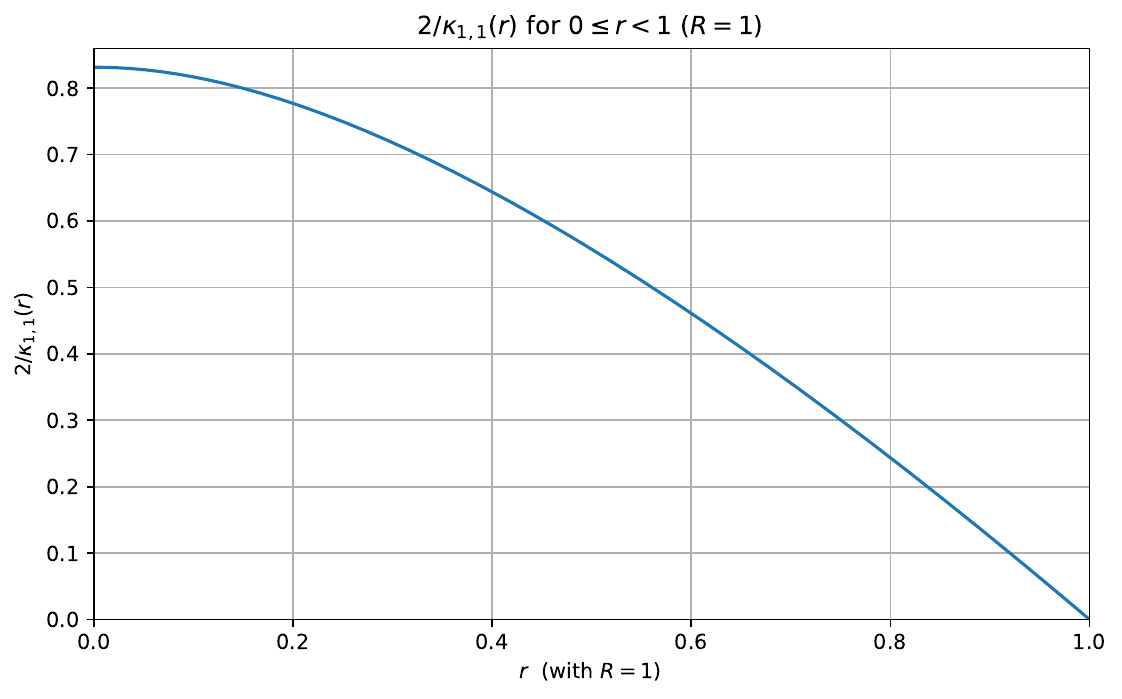}
  \caption{$2/\kappa_{1,1}(r)$ for $R=1$, $0\le r<1$.}
  \label{fig:two-over-kappa11}
\end{figure}

\section{Dirichlet--Cauchy operator and its sharp $L^2$ norm}

Fix an annulus with radii $0<r<R$.  For $g\in L^2(A)$, let $u$ be the (weak) solution of the zero--Dirichlet Poisson problem
\begin{equation}\label{eq:PoissonDirichlet}
\begin{cases}
-\Delta u=g & \text{in }A,\\
u=0 & \text{on }\partial A,
\end{cases}
\end{equation}
and define the \emph{Dirichlet--Cauchy operator} by
\begin{equation}\label{eq:DirichletCauchy}
\mathcal C_A[g](z):=\partial_z u(z),\qquad \partial_z=\tfrac12(\partial_x-i\partial_y).
\end{equation}
Equivalently, in terms of the Green function $G_A(\cdot,\tau)$ for \eqref{eq:PoissonDirichlet},
\begin{equation}\label{eq:GreenRep}
u(z)=\int_A G_A(z,\tau)\,g(\tau)\,dA(\tau),
\qquad
\mathcal C_A[g](z)=\int_A \partial_z G_A(z,\tau)\,g(\tau)\,dA(\tau).
\end{equation}

Let $\lambda_1(A)$ be the first Dirichlet eigenvalue of $-\Delta$ on $A$ and set
\begin{equation}\label{eq:k1def}
k_1:=\sqrt{\lambda_1(A)}.
\end{equation}
In the planar case, $k_1$ is characterized as the smallest positive root of the Bessel cross--product equation
\begin{equation}\label{eq:kEqGeneral}
J_0(kr)\,Y_0(kR)-J_0(kR)\,Y_0(kr)=0 .
\end{equation}

\begin{theorem}[\cite{Kalaj2012IEOT}]\label{thm:KalajNorm}
The Dirichlet--Cauchy operator \eqref{eq:DirichletCauchy} extends boundedly on $L^2(A)$ and satisfies the sharp estimate
\begin{equation}\label{eq:KalajNorm}
\|\mathcal C_A\|_{L^2(A)\to L^2(A)}=\frac{2}{k_1}.
\end{equation}
\end{theorem}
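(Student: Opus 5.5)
The plan is an energy argument for the upper bound, followed by testing on the first Dirichlet eigenfunction for sharpness. The bound comes from one integration by parts plus the Rayleigh characterization of $\lambda_1(A)=k_1^2$. The whole difficulty is tracking the normalization constant, and with the operator read literally I can only reach $\tfrac{1}{2k_1}$, not the stated $\tfrac{2}{k_1}$.

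\textbf{Step 1: $\|\partial_z u\|$ versus $\|\nabla u\|$.} First I will treat $g\in C_c^\infty(A)$, so that $u\in H^1_0(A)$ is smooth up to $\partial A$. For $u\in H^1_0(A)$ the claim is
\[
\int_A|\partial_z u|^2\,dA=\int_A|\partial_{\bar z}u|^2\,dA.
\]
Both sides equal $\int_A \partial_z\partial_{\bar z}u\,\bar u\,dA$ up to sign; this is obtained by integrating by parts twice, and the boundary terms vanish because $u=0$ on $\partial A$. Since $|\partial_z u|^2+|\partial_{\bar z}u|^2=\tfrac12|\nabla u|^2$, it follows that
\[
\|\partial_z u\|_2=\tfrac12\|\nabla u\|_2 .
\]

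\textbf{Step 2: energy bound.} Pair the equation with $\bar u$ and use Cauchy--Schwarz together with the Poincar\'e inequality $k_1\|u\|_2\le\|\nabla u\|_2$, which is the variational definition of $\lambda_1(A)$ in \eqref{eq:k1def}. If the equation is normalized as $-cu_{z\bar z}=g$ with $\Delta=4\partial_z\partial_{\bar z}$, then
\[
\frac c4\|\nabla u\|_2^2=\int_A g\bar u\,dA\le\|g\|_2\|u\|_2\le \frac{1}{k_1}\|g\|_2\|\nabla u\|_2 .
\]
This gives $\|\nabla u\|_2\le\frac{4}{c\,k_1}\|g\|_2$, and by Step 1
\[
\|\mathcal C_A g\|_2\le\frac{2}{c\,k_1}\|g\|_2 .
\]
By density the operator then extends boundedly to all of $L^2(A)$.

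\textbf{Step 3: sharpness.} Let $\varphi$ be the first Dirichlet eigenfunction of $A$. It is radial, namely $\varphi(\rho)=J_0(k_1\rho)Y_0(k_1R)-J_0(k_1R)Y_0(k_1\rho)$, where $k_1$ is the root of \eqref{eq:kEqGeneral}. Taking $u=\varphi$ and $g=-c\,\varphi_{z\bar z}$ turns every inequality in Step 2 into an equality. Cauchy--Schwarz is sharp because $g$ is proportional to $\varphi$, and Poincar\'e is sharp because $\varphi$ is the minimizer. So the constant $\frac{2}{c\,k_1}$ is attained, which is exactly the structure of Remark~\ref{rem:extremizer-annulus}.

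\textbf{Main obstacle: the value of $c$.} The stated value $2/k_1$ corresponds to $c=1$, that is, to the equation $-u_{z\bar z}=g$ being what is meant in \cite{Kalaj2012IEOT}. With the literal standard Laplacian in \eqref{eq:PoissonDirichlet}, $c=4$, and the same computation gives $\tfrac1{2k_1}$, with the same extremizer. So before writing the proof I will check the Green-function normalization of $G_A$ in \eqref{eq:GreenRep} against \cite{Kalaj2012IEOT}; Steps 1--3 do not change under either reading. If the source's normalization cannot be confirmed to give $c=1$, the plan does not establish $2/k_1$ for the operator as literally written here: it establishes $\tfrac1{2k_1}$, and the stated constant would then be off by a factor of $4$.
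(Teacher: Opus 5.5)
Your argument is correct. It also takes a different route from the paper, which gives no proof of Theorem~\ref{thm:KalajNorm}: it quotes \cite{Kalaj2012IEOT} and asserts, in Remark~\ref{rem:Normalization}, that the constants agree after translating conventions.

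Your Steps 1--2 amount to the identity
\[
\|\partial_z(-\Delta_D)^{-1}h\|_2^2=\tfrac14\langle(-\Delta_D)^{-1}h,h\rangle,\qquad h\in L^2(A),
\]
where $-\Delta_D$ is the Dirichlet Laplacian. Hence $\mathcal C_A^*\mathcal C_A$ is a constant multiple of the Dirichlet Green operator, and its norm is read off from $\lambda_1(A)$. This gives more than the citation does:
\begin{itemize}
\item No Fourier or Bessel analysis is needed. Contrast Theorem~\ref{thm:norm-annulus}, where the norm is governed by the mixed eigenvalue $\mu_1^{ND}$ and a mode-by-mode analysis is required.
\item The argument applies verbatim to every bounded planar domain.
\item The annulus enters only through the identification of $k_1$ by \eqref{eq:kEqGeneral}.
\end{itemize}
Two small points. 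You can skip the density step by testing the weak form of the Poisson problem with $u$ itself. The link to Remark~\ref{rem:extremizer-annulus} is only an analogy, since that extremizer comes from the mixed Neumann--Dirichlet problem, not the Dirichlet one.

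Your normalization concern is justified, and it is a defect of the statement, not of your proof. Read literally, \eqref{eq:PoissonDirichlet}--\eqref{eq:DirichletCauchy} correspond to $c=4$. Your computation then gives exactly $\|\mathcal C_A\|=1/(2k_1)$, attained at the first Dirichlet eigenfunction. The disk already shows this: $g=J_0(j_{0,1}|z|)$ gives $u=g/j_{0,1}^2$ and $\|\partial_z u\|_2=\|g\|_2/(2j_{0,1})$. So \eqref{eq:KalajNorm} is off by a factor of $4$, and Remark~\ref{rem:Normalization} does not repair the displayed definition.

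The value $2/k_1$ is correct for $u_{z\bar z}=\pm g$, i.e.\ $\Delta u=\pm 4g$. This is the natural normalization here, for the following reasons.
\begin{itemize}
\item With it, $\bar\partial(\partial_z u)=\pm g$.
\item $\partial_z u$ is orthogonal to the Bergman space $L^2_a(A)$ whenever $u\in H^1_0(A)$. Hence $\mathcal C_A=\pm(I-P)C_A$, with $P$ the Bergman projection.
\item This immediately yields the lower bound $\|C_\Omega\|\ge 2/\sqrt{\lambda_1(\Omega)}$ from the introduction.
\item It is consistent with $2/k_1\le 2/\kappa_1$ in Theorem~\ref{thm:norm-annulus}.
\end{itemize}
Once the statement is corrected in either direction (constant $1/(2k_1)$, or the equation $u_{z\bar z}=\pm g$), your proof is complete.
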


\begin{remark}[Normalization]\label{rem:Normalization}
The constant stated in \cite{Kalaj2012IEOT} appears as $\frac{1}{\sqrt{2}\,k_1}$.
This is a normalization effect: in complex notation one has
\[
\Delta=4\,\partial_z\partial_{\bar z},
\]
so writing the Poisson equation as $u_{z\bar z}=g$ corresponds to $\Delta u=4g$.
In addition, some authors define the transform as $\partial_z u$ while others use $2\,\partial_z u$
(or estimate $\nabla u$ instead of $\partial_z u$), which rescales the operator norm by fixed factors.
After translating between these conventions, the sharp constants are consistent.
\end{remark}

When $R=1$, the parameter $k_1=k_1(r)$ is the smallest positive solution of
\[
J_0(k)\,Y_0(rk)-Y_0(k)\,J_0(rk)=0,
\]
and hence the graph of the sharp norm \eqref{eq:KalajNorm} is obtained by plotting $2/k_1(r)$ for $r\in(0,1)$.

\begin{figure}[h]
\centering
\includegraphics[width=0.85\linewidth]{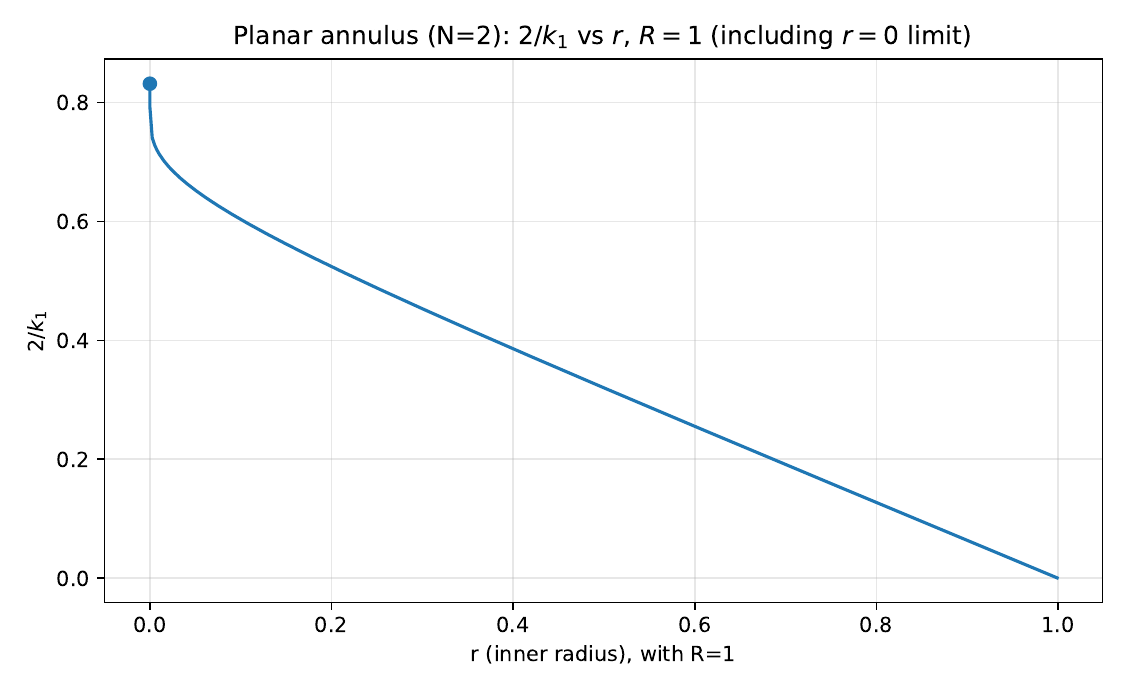}
\caption{Plot of the normalized quantity $2/k_1(r)$ for $R=1$ and $0<r<1$.}
\end{figure}
\section*{Acknowledgements}
The  author gratefully acknowledges financial support from the Ministry of Education, Science and Innovation of Montenegro through the grants \emph{``Mathematical Analysis, Optimisation and Machine Learning''} and \emph{``Complex-analytic and geometric techniques for non-Euclidean machine learning: theory and applications.''} He is also grateful to D.~Khavinson and P.~Melentijevi\'c for useful comments and encouragement.
 \paragraph{Competing interests.}
The author declares that there are no competing interests.

\paragraph{\bf Data availability.}
Data sharing is not applicable to this article as no datasets were generated or analysed during the current study.

\paragraph{\bf Ethical approval.}
Not applicable.

\paragraph{\bf Consent to participate.}
Not applicable.

\paragraph{\bf Consent for publication.}
Not applicable.

\end{document}